\documentclass[12pt]{article}

\usepackage{mathrsfs}

\usepackage{latexsym}
\usepackage{amsmath}
\usepackage{graphics}
\usepackage{epsfig}
\usepackage{color}
\usepackage[numbers,sort&compress]{natbib}
\usepackage{mathrsfs}
\usepackage{amssymb,amsfonts,amsmath, psfrag}
\usepackage{graphicx}
\usepackage{enumerate}
\usepackage{lineno}
\usepackage{subfigure}
\usepackage{caption}
\usepackage{overpic}
\usepackage{epstopdf}

\allowdisplaybreaks[4]
\newtheorem{theo}{Theorem}

\newtheorem{Lemma}{Lemma}

\allowdisplaybreaks [4]
 \textheight = 8.8in \textwidth = 5.4in
\headsep = 0.0in \headheight = 0.0in \topmargin = 0.3in

\newenvironment {proof} {\noindent{\em Proof.}}{\hspace*{\fill}$\Box$\par\vspace{4mm}}

\begin{document}

\title{Uniform hypergraphs with the first two smallest spectral radii}

\author{Jianbin Zhang$^{a}$\footnote{Corresponding author. E-mail:zhangjb@scnu.edu.cn},
 Jianping Li$^{b}$\footnote{E-mail:lijp06@gdut.edu.cn}, Haiyan Guo$^{a}$\footnote{E-mail:ghaiyan@163.com}
\\[2mm]
\small $^{a}$School of Mathematical Sciences, South China Normal University, \\
\small Guangzhou 510631, P. R. China\\
\small $^{b}$Faculty of Applied Mathematics, Guangdong University of Technology, \\
\small Guangzhou 510090, P. R. China
}

\date{ }
\maketitle

\begin{abstract}
The spectral radius of a uniform hypergraph $G$ is
the  the maximum modulus of the eigenvalues of the adjacency tensor of $G$.
For $k\ge 2$, among connected $k$-uniform hypergraphs with $m\ge 1$ edges, we show that the $k$-uniform loose path with $m$ edges is the unique one with minimum spectral radius, and  we also determine the unique ones with second minimum spectral radius when $m\ge 2$.
\\  \\
AMS classification: 05C50, 05C65\\ \\
{\bf KEY WORDS:} spectral radius,  adjacency tensor, uniform hypergraph
edge.
\end{abstract}


\section{Introduction}

Let $G$ be a hypergraph with vertex set $V(G)$ and edge set $E(G)$,
where $E(G)$ is a set whose elements are subsets of $V(G)$. For an integer $k\ge 2$, if each
edge of $G$ contains exactly $k$ distinct vertices, then $G$ is
a $k$-uniform hypergraph. Two vertices $u$ and $v$ are
adjacent if $u$ and $v$ are contained in some edge. An $e$ is incident
with vertex $v$ if $v\in e$. An alternating sequence of vertices
and edges is called a path if all vertices and edges are distinct,
and  a cycle if the first and last vertices are the same, the other
vertices and all edges are distinct. If there exists a path between
any two vertices of $G$, then $G$ is  connected. A hypertree
is a connected acyclic  hypergraph. A vertex of degree one is called
 a pendant vertex. 

For a $k$-uniform hypergraph $G$ with  vertex set $V(G)=\{1, \dots, n\}$, its adjacency
tensor is the tensor
$\mathcal {A}(G)=(a_{i_1\ldots i_k})$ of  order $k$ and dimension $n$
 with $a_{i_1\ldots i_k}=\frac{1}{(k-1)!}$ if
$\{i_1\ldots i_k\}\in E(G)$, $0$ otherwise, where $i_j\in \{1, \dots, n\}$ and $j\in \{1, \dots, k\}$.

For some complex $\lambda$,
if there exists a nonzero vector $x=(x_1,\ldots,x_n)^T$ such that
$\mathcal {A}(G)x=\lambda x^{k-1}$, then $\lambda$ is called an eigenvalue of $G$ and $x$ is called the eigenvector of $G$ corresponding to $\lambda$, where
 $\mathcal {A}(G)x$ is a
$n$-dimensional vector whose $i$-th component is
\begin{equation*}\label{E1}
(\mathcal {A}(G)x)_i=\sum\limits_{i_2,\ldots, i_k=1}^{n}a_{ii_2\ldots
i_k}x_{i_2}\cdots x_{i_k}
\end{equation*}
and $x^{k-1}=(x_1^{k-1},\ldots,x_n^{k-1})^T$,
 Moreover, if $\lambda$ and $x$ are both real, then we call
$\lambda$ an $H$-eigenvalue of $G$. Note that $x^T(\mathcal {A}(G)x)=k\sum_{e\in E(G)}\prod_{v\in e}x_v$.

The spectral radius of a $k$-uniform hypergraph $G$, denoted by $\rho(G)$, is defined as the maximum modulus of eigenvalues of $\mathcal {A}(G)$.
Since $\mathcal{A}(G)$ is symmetric and thus
$\rho(G)$ is the largest $H$-eigenvalue of $\mathcal{A}(G)$, see \cite{QI}.
It is proved in \cite{FGH,PZ} that  for a connected $k$-uniform  hypergraph $G$,  $\mathcal {A}(G)$ has a unique positive eigenvector
$x$ with $\sum_{v\in V(G)}x_v^k=1$ corresponding to $\rho(G)$, which is called the principal eigenvector of $G$.

The problem  to determine the hypergraphs in some given classes of hypergraphs with maximum  spectral radius received much attention.
Li et al.~\cite{LSQ} determined the  uniform hypertree  with maximum spectral radius.
Yuan et al.~\cite{YSS} extended to this to determine the first eight uniform hypertrees  with  maximum spectral radius. Fan et al.~\cite{FTPL}
determined the hypergraphs with maximum spectral radius among  uniform hypergraphs with few edges.
Xiao et al.~\cite{XWL} determined the hypertree with  maximum spectral radius among uniform hypertrees with a given degree sequence. See \cite{} for more work on hypergraphs with maximum spectral radius in someclasses of uniform hypergraphs.
However, there is much less work on the problem  to determine the hypergraphs in some given classes of hypergraphs with minimum  spectral radius.
Li et al.~\cite{LSQ} determined the unique one with minimum spectral radius among $k$-uniform power hypertrees with fixed number of edges. Here a $k$-uniform power hypertree is a $k$-uniform hypertree in which every edge contains at least $k-2$ pendant vertices. Lu and Man \cite{LM} classified all connected $k$-uniform hypergraphs with spectral radius at most $\sqrt[k]{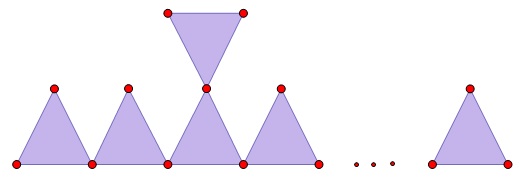}$. There seems no more result in this line.
In this note, for $k\ge 2$, we show that the $k$-uniform loose path with $m$ edges is the unique one with minimum spectral radius among connected $k$-uniform hypergraphs with $m\ge 1$ edges, and  we also determine the unique ones with second minimum spectral radius among connected $k$-uniform hypergraphs with $m\ge 2$ edges. To obtain our main result, we use the nice result (Lemma~\ref{Lem5}) from \cite{LM} and propose a hypergraph transformation that decreases the spectral radius (in Lemma~\ref{Th1}).

\section{Preminaries}

Let $r\ge 1$, $G$ be a hypergraph with $u\in V(G)$ and $e_1,\ldots,e_r\in E(G)$. Suppose that $v_i\in e_i$ and $u\not\in e_i$ for $i=1,2,\ldots,r$. Let
$e_i'=(e_i\backslash \{v_i\})\cup \{u\}$ for $i=1,\ldots,r$. Suppose that   $e_i'\not \in E(G)$ for $i=1,\ldots,r$.
Let  $G'$ be the hypergraph
obtained from $G$ by deleting $e_1,e_2,\ldots,e_r$ and adding
$e_1',e_2',\ldots,e_r'$.   Then we say that $G'$ is obtained from
$G$ by moving $(e_1,e_2,\ldots,e_r)$ from $(v_1,v_2,\ldots,v_r)$ to $u$.

\begin{Lemma}\label{Lem1} \cite{LSQ} Let $r\ge 1$, $G$ be a hypergraph with $u\in V(G)$ and $e_1,\ldots,e_r\in E(G)$.
If $G'$ is obtained from
$G$ by moving $(e_1,e_2,\ldots,e_r)$ from $(v_1,v_2,\ldots,v_r)$ to $u$,
and  $x_u\ge \max\limits_{1\le i\le r}x_{v_i}$, then
$\rho(G')>\rho(G)$.
\end{Lemma}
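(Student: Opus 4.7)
My plan is to use the Rayleigh variational characterization
\[
\rho(G')=\max\Bigl\{k\sum_{e\in E(G')}\prod_{v\in e}y_v\,:\,y\ge 0,\ \textstyle\sum_{v\in V(G')} y_v^k=1\Bigr\},
\]
which is the standard Perron--Frobenius formula for the nonnegative symmetric tensor $\mathcal{A}(G')$ and matches the quadratic-form identity already noted in the paper. First I would test this quotient at $y=x$, the principal eigenvector of $G$, so that $x>0$ and $\sum_v x_v^k=1$. Because $E(G)$ and $E(G')$ agree outside $\{e_1,\dots,e_r\}$ and $\{e_1',\dots,e_r'\}$, and $e_i'$ differs from $e_i$ only by swapping $v_i$ for $u$,
\[
k\sum_{e\in E(G')}\prod_{v\in e}x_v=\rho(G)+k\sum_{i=1}^{r}(x_u-x_{v_i})\prod_{v\in e_i\setminus\{v_i\}}x_v\ge \rho(G),
\]
using the hypothesis $x_u\ge x_{v_i}$ and the positivity of $x$. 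Plugging this into the Rayleigh bound already gives $\rho(G')\ge\rho(G)$.

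To upgrade to strict inequality I would argue by contradiction: suppose $\rho(G')=\rho(G)$. Then the chain above is tight, so $x$ attains the Rayleigh maximum for $G'$. Since $x>0$ is interior to the nonnegative orthant, first-order optimality on the constraint $\sum_v y_v^k=1$ (a one-line Lagrange multiplier computation) forces $x$ to be an eigenvector of $\mathcal{A}(G')$ with eigenvalue $\rho(G')=\rho(G)$. Now I would evaluate the eigenvalue equation at the vertex $u$: because $u\notin e_i$ for every $i$ no edge of $G$ through $u$ has been deleted, while $u\in e_i'$ and $e_i'\setminus\{u\}=e_i\setminus\{v_i\}$, so
\[
(\mathcal{A}(G')x)_u=(\mathcal{A}(G)x)_u+\sum_{i=1}^{r}\prod_{v\in e_i\setminus\{v_i\}}x_v=\rho(G)\,x_u^{k-1}+\sum_{i=1}^{r}\prod_{v\in e_i\setminus\{v_i\}}x_v.
\]
Since $r\ge 1$ and $x>0$, the added sum is strictly positive, so $(\mathcal{A}(G')x)_u>\rho(G)x_u^{k-1}=\rho(G')x_u^{k-1}$, contradicting the eigenvalue equation and establishing $\rho(G')>\rho(G)$.

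The main obstacle is precisely the transition from the easy nonstrict bound $\rho(G')\ge\rho(G)$ to its strict form, because the displayed Rayleigh gain is flat when $x_u=x_{v_i}$ for every $i$. The trick that resolves this is to upgrade ``$x$ is a maximizer for $G'$'' to ``$x$ is an eigenvector of $\mathcal{A}(G')$'' via first-order optimality, and then to read off a contradiction at the single vertex $u$, which is exactly where the moved edges $e_1',\dots,e_r'$ concentrate fresh positive contribution that the old eigenvalue equation cannot accommodate.
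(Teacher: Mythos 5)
Your proof is correct, and since the paper only quotes this lemma from \cite{LSQ} without proof, the relevant comparison is with that source: your argument --- test the Rayleigh quotient of $\mathcal{A}(G')$ at the principal eigenvector $x$ of $G$ to get $\rho(G')\ge\rho(G)$, then rule out equality by noting that a positive maximizer must be an eigenvector of $\mathcal{A}(G')$ and that the moved edges add strictly positive terms to $(\mathcal{A}(G')x)_u$ --- is essentially the standard proof given there. The only caveat, inherited from the statement rather than introduced by you, is that $G$ must be connected for the positive principal eigenvector $x$ to exist, which is implicit in the lemma's use of $x$.
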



\begin{Lemma}\label{Lem2} \cite{XWL} Let $G$ be a connected  $k$-uniform
hypergraph, and  $e=U_1\cup U_2$, $f=V_1\cup V_2$ be two edges of $G$, where $e\cap f=\emptyset$, and $1\le |U_1|=|V_1|<k$.
Let $e'=U_1\cup V_2$ and $f'=V_1\cup U_2$. Suppose that $e',f'\not\in E(G)$.
Let $G'$ be the hypergraph obtained from $G$ by deleting edges $e$ and $f$ and adding edges $e'$ and $f'$.
 Let $x$ be the principle eigenvector of $G$. If $x_{U_1}\geq
x_{V_1}$, $x_{U_2}\leq x_{V_2}$, and one is strict, then
$\rho(G)<\rho(G')$.
\end{Lemma}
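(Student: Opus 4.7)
The plan is to exploit the variational characterization
\[
\rho(H)=\max_{y\geq 0,\ \|y\|_k=1}k\sum_{e\in E(H)}\prod_{v\in e}y_v
\]
for the spectral radius of the adjacency tensor of a $k$-uniform hypergraph $H$. Normalizing the principal eigenvector $x$ so that $\|x\|_k=1$ gives $\rho(G)=k\sum_{e\in E(G)}\prod_{v\in e}x_v$, and applying the inequality to $G'$ yields $\rho(G')\geq k\sum_{e\in E(G')}\prod_{v\in e}x_v$. The whole argument then reduces to comparing the right-hand side of the latter to $\rho(G)$.

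Using $e\cap f=\emptyset$ (so the four sets $U_1,U_2,V_1,V_2$ are pairwise disjoint) and the partitions $e=U_1\cup U_2$, $f=V_1\cup V_2$, $e'=U_1\cup V_2$, $f'=V_1\cup U_2$, I would compute, with $x_W:=\prod_{v\in W}x_v$,
\[
k\sum_{e\in E(G')}\prod_{v\in e}x_v-\rho(G)=k\bigl(x_{U_1}x_{V_2}+x_{V_1}x_{U_2}-x_{U_1}x_{U_2}-x_{V_1}x_{V_2}\bigr)=k(x_{U_1}-x_{V_1})(x_{V_2}-x_{U_2}).
\]
Under the hypotheses both factors are nonnegative, so $\rho(G')\geq\rho(G)$. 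If both inequalities $x_{U_1}\geq x_{V_1}$ and $x_{V_2}\geq x_{U_2}$ are strict, the right-hand side is strictly positive and $\rho(G')>\rho(G)$ follows at once.

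The main obstacle is the case where exactly one of the two inequalities is strict, for then the displayed quantity equals $0$ and the variational bound alone gives only $\rho(G')\geq\rho(G)$. I would rule out equality by contradiction: assume $\rho(G')=\rho(G)$, so $x$ is a maximizer of the variational problem for $G'$; since $x$ is strictly positive, the KKT conditions for the constrained optimization imply $\mathcal{A}(G')x=\rho(G')x^{k-1}$. Suppose for instance $x_{U_1}>x_{V_1}$ and $x_{V_2}=x_{U_2}$. Choose any $v\in U_2$; using $e\cap f=\emptyset$ and the partitions above, one checks that the edges of $G$ and $G'$ containing $v$ coincide outside $\{e\}$ and $\{f'\}$ respectively. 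Equating $(\mathcal{A}(G)x)_v=(\mathcal{A}(G')x)_v=\rho(G)x_v^{k-1}$ then yields
\[
0=\prod_{u\in e\setminus\{v\}}x_u-\prod_{u\in f'\setminus\{v\}}x_u=(x_{U_1}-x_{V_1})\prod_{u\in U_2\setminus\{v\}}x_u,
\]
and since $x>0$ (the empty product being $1$ when $|U_2|=1$), this forces $x_{U_1}=x_{V_1}$, a contradiction. The symmetric case $x_{U_1}=x_{V_1}$, $x_{V_2}>x_{U_2}$ is handled by taking $v\in V_2$ and comparing the contributions of $f$ and $e'$.
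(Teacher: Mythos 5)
The paper offers no proof of Lemma~\ref{Lem2} to compare against --- it is quoted from \cite{XWL} --- so your proposal must stand on its own. Its skeleton is sound and is indeed the standard route: the variational bound $\rho(G')\ge k\sum_{e\in E(G')}\prod_{v\in e}x_v$, the identity $k\bigl(x_{e'}+x_{f'}-x_e-x_f\bigr)=k(x_{U_1}-x_{V_1})(x_{V_2}-x_{U_2})$ (valid since the four sets are pairwise disjoint and $e',f'\notin E(G)$, $e'\ne f'$), and, in the degenerate case, the Lagrange/KKT observation that a strictly positive maximizer for $G'$ must satisfy $\mathcal{A}(G')x=\rho(G')x^{k-1}$ (no connectivity of $G'$ is needed for this). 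Your first subcase, $x_{U_1}>x_{V_1}$ and $x_{U_2}=x_{V_2}$ with $v\in U_2$, is handled correctly: the common edges cancel and $(x_{U_1}-x_{V_1})\prod_{u\in U_2\setminus\{v\}}x_u=0$ forces $x_{U_1}=x_{V_1}$, a genuine contradiction.

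The symmetric case, however, fails as you describe it. With $x_{U_1}=x_{V_1}$ and $x_{V_2}>x_{U_2}$, taking $v\in V_2$ and comparing the contributions of $f$ and $e'$ yields
\[
0=\prod_{u\in f\setminus\{v\}}x_u-\prod_{u\in e'\setminus\{v\}}x_u=(x_{V_1}-x_{U_1})\prod_{u\in V_2\setminus\{v\}}x_u,
\]
which is \emph{automatically} zero under the case hypothesis $x_{U_1}=x_{V_1}$ --- a tautology, not a contradiction. The strict inequality in this case sits in the second blocks, so you must probe a vertex whose two swapped edges differ in those blocks: take $v\in V_1$, where the edges of $G$ and $G'$ through $v$ coincide outside $\{f\}$ and $\{f'\}$ respectively, giving
\[
0=\prod_{u\in f\setminus\{v\}}x_u-\prod_{u\in f'\setminus\{v\}}x_u=\Bigl(\,\prod_{u\in V_1\setminus\{v\}}x_u\Bigr)(x_{V_2}-x_{U_2}),
\]
hence $x_{V_2}=x_{U_2}$, the desired contradiction (taking $v\in U_1$ and comparing $e$ with $e'$ works equally well). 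With this one-line repair --- choosing the test vertex in the block where equality holds, so that the surviving factor is the strict difference --- your argument is complete and correct.
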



A  path $(u_0,e_1,u_1,\ldots,e_p,u_p)$ in a $k$-unoform hypergraph $G$
 is called a pendant path at $u_0$ if $d(u_0)\ge 2, d(u_i)=2$ for $i=1,2,\ldots,p-1$, $d(u_p)=1$ and $d(u)=1$ for any $u\in e_i\setminus \{u_{i-1},u_i\}$ with $i=1,2,\ldots,p$. If $p=1$, then it is a pendant edge at $u_0$.

For a $k$-uniform hypergraph $G$ with  a pendant path $P$ at $u$, we say that $G$ is obtained from $H$ by attaching a pendant path $P$ at $u$, where $H=G[V(G)\setminus (V(P)\setminus \{u\})]$. We write $G=H(u,p)$ if the length of $P$ is $p$. Let $H(u,0)=H$.

For a $k$-unifoorm hypergraph $G$ with $u\in V(G)$, and $p\ge q\ge 0$, let $G_u(p,q)=(G_u(p))_u(q)$.

\begin{Lemma}\cite{PL}\label{Lem3} Let $u$ be a vertex of a connected $k$-uniform hypergraph $G$
with $|E(G)|\geq 1$. If $p\geq q\geq 1$, then $\rho(G_u(p,q))>
\rho(G_u( p+1,q-1))$.
\end{Lemma}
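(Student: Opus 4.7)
The plan is to compare both hypergraphs against a common sub-hypergraph. Set $H:=G_u(p+1,q-1)$, $G':=G_u(p,q)$, and $K:=G_u(p,q-1)$; then $H$ is obtained from $K$ by attaching a single pendant edge at the endpoint $u_p$ of the length-$p$ pendant path, while $G'$ is obtained from $K$ by attaching a pendant edge at the endpoint $v_{q-1}$ of the length-$(q-1)$ pendant path. The strategy is to show that the principal eigenvector $y$ of $K$ satisfies $y_{u_p}<y_{v_{q-1}}$, and then to translate this strict eigenvector inequality in $K$ into the spectral radius inequality $\rho(G')>\rho(H)$.

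For the eigenvector comparison in $K$, I would linearize the eigenvalue equations along each pendant path. By uniqueness of the positive Perron eigenvector, all pendant (degree-one) vertices within a common edge of a pendant path share a single coordinate $b_i=(y_{u_{i-1}}y_{u_i}/\rho(K))^{1/2}$; substituting this back into the eigenvalue equations at the interior path-vertices yields, for $\alpha_i:=y_{u_i}^{k/2}$, $\gamma_j:=y_{v_j}^{k/2}$, $\mu:=\rho(K)^{k/2}$, the linear recurrences $\alpha_{i+1}=\mu\alpha_i-\alpha_{i-1}$ and $\gamma_{j+1}=\mu\gamma_j-\gamma_{j-1}$, with endpoint conditions $\mu\alpha_p=\alpha_{p-1}$, $\mu\gamma_{q-1}=\gamma_{q-2}$, and common initial value $\alpha_0=\gamma_0=y_u^{k/2}$. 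Reading both sequences backward from their endpoints via the shared M\"obius iteration $t_1:=\mu$, $t_{j+1}:=\mu-1/t_j$ then yields
\[
\frac{\alpha_p}{\gamma_{q-1}}=\Bigl(\prod_{j=q}^{p}t_j\Bigr)^{-1}.
\]
Since $u$ is incident to additional edges in the base hypergraph $G$ (using $|E(G)|\ge 1$), its eigenvector entry strictly dominates the endpoint values along each pendant path, so the analogous product $\prod_{j=1}^{\ell}t_j$ associated with a length-$\ell$ pendant path from $u$ strictly exceeds $1$; the monotonicity of this product in the path length then forces $\prod_{j=q}^{p}t_j>1$ whenever $p\ge q$, equivalently $y_{u_p}<y_{v_{q-1}}$.

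The main obstacle is the second step: deducing $\rho(G')>\rho(H)$ from the strict inequality $y_{u_p}<y_{v_{q-1}}$ in the principal eigenvector of $K$. Lemma~\ref{Lem1} does not apply directly, since its hypothesis would require the analogous inequality $x_{v_{q-1}}\ge x_{u_p}$ in the principal eigenvector of $H$ itself, which can fail even when the corresponding inequality in $K$ holds (this is easy to verify on a small example such as $k=2$, $p=q=2$ with $G=K_2$). A convenient remedy is a Schwenk-type characteristic polynomial identity tailored to pendant-edge attachments in $k$-uniform hypergraphs: both $\phi_H(\lambda)$ and $\phi_{G'}(\lambda)$ admit decompositions in terms of $\phi_K(\lambda)$ plus correction terms depending monotonically on the attachment-vertex coordinate of $y$; evaluating at $\lambda=\rho(H)$ and invoking the Perron--Frobenius theory for nonnegative tensors, the sign of $\phi_{G'}(\rho(H))$ is controlled by $y_{v_{q-1}}-y_{u_p}>0$, which yields $\rho(G')>\rho(H)$.
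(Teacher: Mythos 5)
Your overall architecture has a genuine gap at its decisive step. The paper itself does not prove this lemma (it cites \cite{PL}), but the intended technique is visible in its proof of the analogous Lemma~\ref{Th1}: assume for contradiction that $\rho(G_u(p,q))\le\rho(G_u(p+1,q-1))$, take the principal eigenvector of $G_u(p+1,q-1)$ \emph{itself}, and work inductively down the two legs, at each stage using the moving lemma (Lemma~\ref{Lem1}) or the swapping lemma (Lemma~\ref{Lem2}) to produce a copy of $G_u(p,q)$ with strictly larger spectral radius --- a contradiction. This contradiction scheme is precisely what dissolves the obstacle you correctly identify (that Lemma~\ref{Lem1} needs the eigenvector inequality in $H=G_u(p+1,q-1)$, not in your auxiliary graph $K=G_u(p,q-1)$): one does not need the inequality to \emph{hold} in $H$, only to split into cases on it. Your remedy instead invokes ``a Schwenk-type characteristic polynomial identity tailored to pendant-edge attachments in $k$-uniform hypergraphs,'' and this tool does not exist in usable form for $k\ge 3$: the characteristic polynomial of an adjacency tensor is a resultant of degree $n(k-1)^{n-1}$, no pendant-attachment decomposition of it is known, and the matrix-style sign argument ``$\phi_{G'}(\rho(H))$ has a sign controlled by $y_{v_{q-1}}-y_{u_p}$, hence $\rho(G')>\rho(H)$'' has no established tensor analogue. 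Since this is exactly the step that converts your eigenvector comparison into the spectral-radius inequality, the proof as written does not close.

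There is also a secondary gap in your eigenvector comparison inside $K$. The linearization is correct and nice: with $\alpha_i=y_{u_i}^{k/2}$ and $\mu=\rho(K)^{k/2}$ one indeed gets $\alpha_{i+1}=\mu\alpha_i-\alpha_{i-1}$ with $\mu\alpha_p=\alpha_{p-1}$, and your product formula $\alpha_p/\gamma_{q-1}=\bigl(\prod_{j=q}^{p}t_j\bigr)^{-1}$ follows. But the concluding ``monotonicity of this product in the path length'' is circular: $\prod_{j=q}^{p}t_j=\prod_{j=1}^{p}t_j\big/\prod_{j=1}^{q-1}t_j>1$ is literally equivalent to $y_{u_p}<y_{v_{q-1}}$, the statement being proved. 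If $\mu\ge 2$ one gets $t_j>1$ for all $j$ by induction and everything is fine; but the regime relevant to this paper is $\rho<\sqrt[k]{4}$, i.e.\ $\mu<2$, where $t_j=\sin((j+1)\theta)/\sin(j\theta)$ with $\mu=2\cos\theta$, and these ratios drop below $1$ once $(j+\tfrac12)\theta$ passes $\pi/2$, so the product is not monotone in the length without a further argument tying $\theta$ to the boundary condition at $u$. You would need to supply that argument (or avoid it altogether, as the contradiction-plus-edge-moving proof of \cite{PL} and of Lemma~\ref{Th1} does, using only Lemmas~\ref{Lem1} and~\ref{Lem2}).
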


Let  $G$  be a connected $k$-uniform hypergraph with $u,v\in V(G)$, and $p\ge q\ge 0$,  let $G_{u,v}(p,q)=(G_u(p))_v(q)$.

\begin{Lemma}  \label{Th1} Let $G$ be a $k$-uniform hypergraph with $k\ge 3$. Let $e$ be a pendant edge of   $G$, and $u$ and $v$ be
 two pendant vertices in $e$. If $p\geq q\geq 1$, then $\rho(G_{u,v}(p,q))>
\rho(G_{u,v}(p+1,q-1))$.
\end{Lemma}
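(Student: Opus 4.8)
The plan is to reduce the comparison to a single scalar equation in $\rho$ and to track how its largest root moves as one unit of length is transferred from the shorter pendant path to the longer one. It is tempting to move the last edge of the longer path onto the end of the shorter path and invoke Lemma~\ref{Lem1}, but this fails: on small instances the principal eigenvector turns out to be \emph{larger} at the penultimate vertex of the long path than at the tip of the short path, so the hypothesis $x_u\ge\max_i x_{v_i}$ of Lemma~\ref{Lem1} does not hold in the needed direction. The inequality is genuinely global (it degenerates to an equality when the base is trivial), so I would work with the full eigenvalue equation and exploit the symmetry of $u$ and $v$ as twin pendant vertices of the pendant edge $e$.

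First I would fix notation: write $e=\{w,u,v\}\cup S$, where $w$ is the unique vertex of $e$ of degree at least $2$ and $S$ consists of $k-3$ pendant vertices, and let $\rho$ and $x$ be the spectral radius and principal eigenvector of the graph under consideration. Along a pendant path $a_0=u,a_1,\dots,a_\ell$ the equations at the $k-2$ decorating pendants of each edge force them to share a common value, and after the substitution $X_i=x_{a_i}^{k/2}$ with $\mu=\rho^{k/2}$ the interior recursion collapses to the linear three-term recurrence $X_{i-1}+X_{i+1}=\mu X_i$ with tip condition $X_{\ell+1}=0$. Hence a pendant path of length $\ell$ influences the equation at its root only through the ``impedance'' $R_\ell(\mu)=X_1/X_0=U_{\ell-1}(\mu/2)/U_\ell(\mu/2)$, a ratio of Chebyshev polynomials of the second kind; I abbreviate $\alpha_\ell=\rho-\rho^{-(k-2)/2}R_\ell$.

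Next I would combine the equations at $u$, $v$ and $w$. Eliminating the pendants $S$ of $e$ gives $\alpha_p\,x_u^{2k/3}=\rho^{-(k-3)/3}(x_wx_v)^{k/3}$ together with its mirror image under $p,u\leftrightarrow q,v$; multiplying these and feeding the result into the equation at $w$ collapses everything to a single scalar equation
\begin{equation*}
\Phi(\rho)=\frac{\rho^{-(k-3)}}{\alpha_p(\rho)\,\alpha_q(\rho)},
\end{equation*}
where $\Phi(\rho)=\rho-\psi(\rho)$ and $\psi(\rho)$ is the ``load'' the fixed base places at $w$, depending only on the base and on $\rho$. The crucial structural point, bought precisely by the symmetry of the two attachment points, is that $(p,q)$ enters only through the \emph{product} $\alpha_p\alpha_q$; this is the difference from the single-vertex case of Lemma~\ref{Lem3}, whose analogous equation involves the \emph{sum} $\alpha_p+\alpha_q$.

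Finally I would compare the largest roots. Writing $\rho_2=\rho(G_{u,v}(p+1,q-1))$ and evaluating the $(p,q)$-equation at $\rho_2$, the sign of $\Phi(\rho_2)-\rho_2^{-(k-3)}/(\alpha_p(\rho_2)\alpha_q(\rho_2))$ is governed by whether $\alpha_p\alpha_q<\alpha_{p+1}\alpha_{q-1}$ there. Since $\alpha_\ell$ is increasing in $\rho$ while $\Phi$ increases and the right-hand side decreases, the secular function is increasing through its largest root, so $\alpha_p\alpha_q<\alpha_{p+1}\alpha_{q-1}$ at $\rho_2$ forces $\rho(G_{u,v}(p,q))>\rho_2$, which is the claim. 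The hard part is exactly this inequality, namely the log-convexity of $\ell\mapsto\alpha_\ell(\rho)$ for $p\ge q$. Writing $R_\ell=\sinh(\ell\theta)/\sinh((\ell+1)\theta)$ with $\mu/2=\cosh\theta$ reduces it to a monotonicity estimate for these hyperbolic ratios, which holds once $\rho$ exceeds an explicit threshold; I would therefore close the argument by verifying that every graph in question has spectral radius above that threshold, using a lower bound on $\rho$ of the type supplied by \cite{LM}.
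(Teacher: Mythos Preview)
Your proposal takes an analytic route (reduce to a scalar secular equation in $\rho$ and compare roots) that is genuinely different from the paper's combinatorial proof, but it has real gaps and, more importantly, your opening diagnosis is off.

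\medskip
\textbf{You dismiss the edge--moving approach too quickly.} The paper's proof \emph{does} use Lemma~\ref{Lem1}, just not as a one-shot direct move. It argues by contradiction: assume $\rho(G_{u,v}(p,q))\le\rho(G_{u,v}(p+1,q-1))$ and let $x$ be the principal eigenvector of the latter. If $x_{u_{p+1}}\le x_{v_q}$ then moving $e_{p+1}$ from $u_{p+1}$ to $v_q$ (Lemma~\ref{Lem1}) produces $G_{u,v}(p,q)$ with strictly larger radius, a contradiction; hence $x_{u_{p+1}}>x_{v_q}$. One then walks back along the two paths, using Lemma~\ref{Lem2} at each step to force $x_{u_{p-i}}>x_{v_{q-i-1}}$, and a final swap between $e$ and $e_{p-q+1}$ yields the contradiction. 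So the ``small-instance'' observation that $x$ is larger at the penultimate vertex of the long path than at the tip of the short one is not an obstruction---it is precisely the first conclusion of the contradiction argument.

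\medskip
\textbf{Gaps in your analytic route.} First, the function $\psi(\rho)$ you call the ``load the fixed base places at $w$'' is not a well-defined single-valued function of $\rho$ for a general base: the eigenvector equations are nonlinear, so the base does not determine the ratios $x_i/x_w$ uniquely from $\rho$ alone, and your monotonicity claim ``$\Phi$ increases, the right-hand side decreases'' is unjustified for arbitrary $G$. Second, the whole comparison rests on $\alpha_p\alpha_q<\alpha_{p+1}\alpha_{q-1}$, i.e.\ log-convexity of $\ell\mapsto\alpha_\ell$, which you yourself say holds only above a threshold in $\rho$. But the lemma is stated for \emph{every} $k$-uniform $G$ with a pendant edge; there is no hypothesis bounding $\rho$ from below, so a threshold argument at best proves a weaker statement. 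Third, the plan to obtain the needed lower bound ``of the type supplied by \cite{LM}'' does not work: \cite{LM} classifies hypergraphs with $\rho\le\sqrt[k]{4}$ (an \emph{upper}-bound result) and does not furnish the lower bound you need---indeed, in the paper's applications one has $\mu=\rho^{k/2}<2$, so your hyperbolic parametrisation $\mu/2=\cosh\theta$ is in the wrong regime there.

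\medskip
In short: the reduction to $\alpha_p\alpha_q$ is a nice structural observation, but turning it into a proof for arbitrary $G$ requires control of $\psi$ and the log-convexity inequality that you have not supplied; the paper's inductive contradiction via Lemmas~\ref{Lem1} and~\ref{Lem2} avoids all of this.
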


\begin{proof}  Suppose that
 $P=(u_1,e_1,u_2,e_2,\ldots$, $u_{p+1}, e_{p+1},
u_{p+2})$  and $Q=(v_1,f_1,$ $v_2,f_2,\ldots, v_{q-1}, f_{q-1}, v_{q})$ are two
 pendant paths of $G_{u,v}(p+1,q-1)$ at $u$ of length $p+1$ and at $v$
of length $q-1$, respectively, where $u=u_1$ and $v=v_1$. If $q=1$, then let $Q=(v_1)$.

Suppose that $\rho(G_{u,v}(p,q))\le
\rho(G_{u,v}(p+1,q-1))$.
Let $x$ be the principal eigenvector of $G_{u,v}(p+1,q-1)$.

Suppose that  $x_{u_{p+1}}\leq x_{v_{q}}$. Let   $G'$ be the hypergraph obtained from $G_{u,v}(p+1,q-1)$ by
moving edge $e_{p+1}$ from $u_{p+1}$ to $v_{q}$. It is obvious that
$G'\cong G_{u,v}(p,q)$. By Lemma \ref{Lem1}, we have
$\rho(G_{u,v}(p,q))=\rho(G')> \rho(G_{u,v}(p+1,q-1))$, a contradiction. Hence
$x_{u_{p+1}}> x_{v_{q}}$.

Suppose that $i\ge 1$ and $x_{u_{p+1-i}}>x_{v_{q-i}}$ for $i\le q-2$.  We want to show that
$x_{u_{p-i}}>x_{v_{q-i-1}}$.

Suppose that $x_{u_{p-i}}\leq x_{v_{q-i-1}}$. If  $x_{e_{p-i}\setminus\{u_{p-i},u_{p-i+1}\}}>x_{f_{q-i-1}\setminus\{v_{q-i-1},v_{q-i}\}}$.
Let  $U_1=e_{p-i}\setminus \{u_{p-i}\}$ and $V_1=f_{q-i-1}\setminus \{v_{q-i-1}\}$. Let $G'$ be the hypergraph $G'$ obtained from $G_{u,v}(p+1,q-1)$ by deleting edges $e_{p-i}$ and $f_{q-i-1}$ and adding edges $e'_{p-i}$ and $f'_{q-i-1}$, where  $e'_{p-i}=U_1\cup (f_{q-i-1}\setminus V_1)$ and $f'_{q-i-1}=V_1\cup (e_{p-i}\setminus U_1)$.
Note that     $G'\cong G_{u,v}(p,q)$. We have by Lemma
\ref{Lem2} that   $\rho(G_{u,v}(p,q))=\rho(G')> \rho(G_{u,v}(p+1,q-1))$, a contradiction. Thus $x_{e_{p-i}\setminus\{u_{p-i},u_{p-i+1}\}}\leq x_{f_{q-i-1}\setminus\{v_{q-i-1},v_{q-i}\}}$. Now let $U_1=\{u_{p-i+1}\}$ and $V_1=\{v_{p-i}\}$. Let $G'$ be the hypergraph obtained from $G_{u,v}( p+1,q-1)$ by deleting edges $e_{p-i}$ and $f_{q-i-1}$ and adding edges $e'_{p-i}$ and $f'_{q-i-1}$, where $e'_{p-i}=V_1\cup(e_{p-i}\setminus U_1)$ and $f'_{q-i-1}=U_1\cup(f_{q-i-1}\setminus V_1)$. Note that $G'\cong G_{u,v}(p,q)$. By Lemma
\ref{Lem2}, we have  $\rho(G')=\rho(G_{u,v}(p,q))> \rho(G_{u,v}(p+1,q-1))$, also a contradiction. It follows that $x_{u_{p-i}}>x_{v_{q-i-1}}$, as desired.

Let $e=\{u_1,v_1,w_1,w_2,\dots, w_{k-2}\}$ with $d_G(w_1)\ge 2$ and let $e_{p-q+1}=\{u_{p-q+1},u_{p-q+2}, w'_1,$ $w'_2,\dots, w'_{k-2}\}$. Clearly, $x_{w_2}=\cdots=x_{w_{k-2}} $ and  $x_{w'_1}=\cdots=x_{w'_{k-3}}$. If $x_{w_1}\leq x_{w_1'}$, then we can obtain a $G'$  from $G_{u,v}(p+1,q-1)$ by moving all the edges except $e$  incident with $w_1$ from $w_1$ to $w_1'$. By Lemma \ref{Lem1}  and the fact that $G'\cong G_{u,v}(p,q)$, we have $\rho(G_{u,v}(p,q))> \rho(G_{u,v}(p+1,q-1))$, a contradiction. Hence $x_{w_1}>x_{w_1'}$.

Suppose that  $x_{e\setminus\{v_1,w_1\}}\geq x_{e_{p-q+1}\setminus\{u_{p-q+2},w_1'\}}$. Let $U_1=e\setminus\{v_1\}$ and $V_1=e_{p-q+1}\setminus \{u_{p-q+2}\}$. Thus we can form a hypergraph $G'$ from  $G_{u,v}(p+1,q-1)$ by deleting edges $e$ and $e_{p-q+1}$ and adding edges $e'$ and $e'_{p-q+1}$, where $e'=U_1\cup(e_{p-q+1}\setminus V_1)$ and  $e'_{p-q+1}=V_1\cup(e\setminus U_1)$. It is obvious that $G'\cong G_{u,v}(p,q)$. By Lemma \ref{Lem2}, we have $\rho(G_{u,v}(p,q))> \rho(G_{u,v}(p+1,q-1))$, a contradiction. Thus $x_{e\setminus\{v_1,w_1\}}< x_{e_{p-q+1}\setminus\{u_{p-q+2},w_1'\}}$. Now let $U_1=\{w_1\}$ and $V_1=\{w_1'\}$. Let $G'$  be the hypergraph obtained from $G_{u,v}(p+1,q-1)$ by deleting edges $e$ and $e_{p-q+1}$ and adding edges $e'$ and $e'_{p-q+1}$, where $e'=V_1\cup(e\setminus U_1)$ and $e'_{p-q+1}=U_1\cup(e_{p-q+1}\setminus V_1)$. Note that $G'\cong G_{u,v}(p,q)$. By Lemma \ref{Lem2}, we have $\rho(G')=
\rho(G_{u,v}(p,q))> \rho(G_{u,v}(p+1,q-1))$, also  a contradiction. We complete the proof.
\end{proof}

A hypergraph $G$ is said to be reducible if every edge
contains at least one pendant vertex.
For a reducible $k$-uniform hypergraph $G$ with $e\in E(G)$, let $v_e$ be a pendant vertex in $e$, and let $G'$ be the hypergraph with $V(G')=V(G)\setminus \{v_e: e\in E(G)\}$ and $E(G')=\{e\setminus\{v_e\}: e\in E(G)\}$.  We say that $G'$ is reduced from $G$. 

\begin{Lemma}\cite{LM} \label{Lem4} Let $G$ be a reducible $k$-uniform hypergraph. If $G'$ is reduced from $G$, then
$\rho^k(G)=\rho^{k-1}(G')$.
\end{Lemma}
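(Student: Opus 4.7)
The plan is to construct a positive eigenvector of $\mathcal{A}(G')$ from the principal eigenvector $x$ of $G$ via a pointwise power transformation, with a quantitative relation between the two eigenvalues. Since any positive eigenvector of the nonnegative symmetric tensor of a connected uniform hypergraph corresponds to the spectral radius (the very fact cited for $\mathcal{A}(G)$ in the introduction), this would pinpoint $\rho(G')$ in terms of $\rho(G)$.

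First I would exploit the eigenvalue equation at a pendant vertex $v_e$: since $v_e$ belongs only to the edge $e$ and has degree one, that component of $\mathcal{A}(G)x=\rho(G)\,x^{k-1}$ collapses to
\[\rho(G)\,x_{v_e}^{k-1}=\prod_{u\in e\setminus\{v_e\}}x_u,\]
giving a closed-form expression for $x_{v_e}$ in terms of the remaining values on $e$. Next I would substitute this expression into the eigenvalue equation of $\mathcal{A}(G)$ at an arbitrary vertex $w\in V(G')$ lying in edges $e_1,\dots,e_s\in E(G)$; writing $t_i:=e_i\setminus\{v_{e_i}\}\in E(G')$ for the corresponding reduced edges, a careful collection of exponents rewrites this equation in the shape
\[\rho(G)^{k/(k-1)}\,x_w^{k(k-2)/(k-1)}=\sum_{i=1}^{s}\Bigl(\prod_{u\in t_i\setminus\{w\}}x_u\Bigr)^{k/(k-1)}.\]
The crucial observation is then that setting $y_w:=x_w^{k/(k-1)}$ for $w\in V(G')$ transforms this into exactly $(\mathcal{A}(G')\,y)_w=\rho(G)^{k/(k-1)}\,y_w^{k-2}$, so $y$ is a positive eigenvector of the $(k-1)$-uniform adjacency tensor $\mathcal{A}(G')$ with eigenvalue $\rho(G)^{k/(k-1)}$. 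By the Perron--Frobenius result cited in the introduction (applied to connected components separately if $G'$ happens to be disconnected), this eigenvalue equals $\rho(G')$, and raising both sides to the power $k-1$ yields $\rho^{k}(G)=\rho^{k-1}(G')$.

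The hard part is not the substitution but the discovery of the correct rescaling $y=x^{k/(k-1)}$: this exponent is forced by the simultaneous need to drop the vertex-weight exponent from $k-1$ in $G$ to $k-2$ in $G'$ and the edge-product arity from $k-1$ variables per edge in $G$ to $k-2$ variables per edge in $G'$. Once this rescaling is guessed, verification reduces to routine bookkeeping of exponents, and the two displayed identities above match by construction.
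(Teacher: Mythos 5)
Your proposal is correct, and the exponent bookkeeping checks: at a pendant vertex $v_e$ the eigenequation gives $x_{v_e}=\rho^{-1/(k-1)}\bigl(\prod_{u\in e\setminus\{v_e\}}x_u\bigr)^{1/(k-1)}$; substituting into the equation at a surviving vertex $w$ and using $\prod_{u\in e_i\setminus\{v_{e_i}\}}x_u=x_w\prod_{u\in t_i\setminus\{w\}}x_u$ yields precisely your display, since $1+\tfrac{1}{k-1}=\tfrac{k}{k-1}$ and $k-1-\tfrac{1}{k-1}=\tfrac{k(k-2)}{k-1}$; and $y=x^{k/(k-1)}$ then satisfies $(\mathcal{A}(G')y)_w=\rho(G)^{k/(k-1)}y_w^{k-2}$, the eigenequation of the $(k-1)$-uniform hypergraph $G'$, whence $\rho(G')^{k-1}=\rho(G)^k$. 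Note, however, that the paper contains no proof of this lemma at all: it is quoted from Lu and Man \cite{LM}, so the real comparison is with their argument, which is quite different in flavor. They work with their machinery of consistently $\alpha$-normal weighted incidence matrices: $\rho(G)=\rho$ iff $G$ admits a consistent labeling with $\sum_{e\ni v}B(v,e)=1$ at each vertex and $\prod_{v\in e}B(v,e)=\alpha=\rho^{-k}$ on each edge; a pendant vertex necessarily carries weight $1$, so deleting one pendant vertex per edge preserves all edge products, and $G$ is consistently $\alpha$-normal exactly when $G'$ is, with $\rho(G)^{-k}=\alpha=\rho(G')^{-(k-1)}$. Their route avoids eigenvectors entirely and plugs into the rest of their classification; yours is more elementary and self-contained, producing an explicit eigenvector of $G'$ by the rescaling $y=x^{k/(k-1)}$, which also makes the relation between the principal eigenvectors of $G$ and $G'$ transparent.

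Three small points to tighten. First, the Perron--Frobenius fact you need --- that \emph{any} positive eigenvector of a connected hypergraph's adjacency tensor belongs to $\rho$ --- is slightly stronger than the existence-and-uniqueness statement quoted in the introduction from \cite{FGH,PZ}; it is standard, and follows immediately from the Collatz--Wielandt bounds $\min_i (\mathcal{A}y)_i/y_i^{k-1}\le\rho\le\max_i (\mathcal{A}y)_i/y_i^{k-1}$ valid for any positive $y$, which you could invoke instead. Second, your parenthetical about a disconnected $G'$ is vacuous: deleting degree-one vertices cannot disconnect a connected hypergraph; the componentwise reduction is instead needed when $G$ itself is disconnected (the lemma does not assume connectivity, and then the principal eigenvector of $G$ need not be positive). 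Third, your derivation implicitly assumes that distinct edges of $G$ reduce to distinct edges of $G'$; if two edges differ only in their deleted pendant vertices (e.g.\ two $k$-edges sharing the same $k-1$ non-pendant vertices), the reduced edges coincide and your sum silently double-counts --- the identity, and indeed the lemma as literally stated, then only survive with multi-edge bookkeeping. This is a gap in the statement's generality rather than in your argument, and it never arises for the hypertrees the paper actually reduces.
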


For a $k$-uniform hypertree $G$  with $E(G)=\{e_1, \dots, e_m\}$, if $V(G)=\{v_1, \dots, v_n\}$ with $n=(k-1)m+1$, and $e_i=\{v_{(i-1)(k-1)+1}, \dots, v_{(i-1)(k-1)+k}\}$ for $i=1, \dots, m$, then we call $G$ a $k$-uniform loose path, denoted by $P_m^{(k)}$.

For $k\ge 2$ and $m\ge 3$,
let $D_m^{(k)}$ be the $k$-uniform hypertree obtained from a $k$-uniform loose path $P_{m-1}^{(k)}=\left(u_0,e_1,u_1,\dots ,e_{m-1},u_{m-1}\right)$  by attaching a pendant edge at
$u_1$.

For $k\ge 3$ and $m\ge 3$,
let $D_m'^{(k)}$ be the $k$-uniform hypertree obtained from a $k$-uniform loose path $P_{m-1}^{(k)}=\left(u_0,e_1,u_1,\dots ,e_{m-1},u_{m-1}\right)$  by attaching a pendant edge at
a vertex of degree $1$ in $e_2$.

\begin{Lemma}\cite{LM} \label{Lem5} Let $G$ be a $k$-uniform hypergraph with  $k\geq 3$. 

$(i)$ If $k=3$ and $\rho(G)<\sqrt[k]{4}$, then $G$ is isomorphic to one of the following hypergraphs:
$P_m^{(3)}$ for $m\geq 1$, $D_m^{(3)}$ for $m\geq 3$, $D_m'^{(3)}$
for $m\geq 4$, $B_m^{(3)}$ for $m\geq 5$, $B_m'^{(3)}$ for $m\geq 6$,
$\bar{B}_m^{(3)}$ for $m\geq 7$, $BD_m^{(3)}$ for $m\geq 5$, and
thirty-one  additional hypergraphs: $E_{1,2,2}^{(3)}$,
$E_{1,2,3}^{(3)}$, $E_{1,2,4}^{(3)}$, $F_{2,3,3}^{(3)}$, $F_{2,2,l}^{(3)}$ $($for
$2\leq l\leq 6$$)$, $F_{1,3,l}^{(3)}$ $($for $3\leq l\leq 13$$)$,
$F_{1,4,l}^{(3)}$ $($for $4\leq l\leq 7$$)$, $F_{1,5,5}^{(3)}$, and
$G_{1,1:l:1,3}^{(3)}$ $($for $0\leq l\leq 5$$)$ $($see Figure \ref{fig2}$)$.

$(ii)$ If $k=4$, $G$ is not reducible, and  $\rho(G)\le \sqrt[k]{4}$, then $G\cong H_{1,1,1, t}$ with $t=1,2,3,4$ $($see Figure \ref{fig1}$)$.

$(iii) $ If $k\geq 5$ and $\rho(G)\le \sqrt[k]{4}$, then $G$ is reducible.
\end{Lemma}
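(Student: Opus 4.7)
The plan is to bootstrap on $k$ via Lemma \ref{Lem4}: since $\rho(G)^k = \rho(G')^{k-1}$ when $G$ is reducible with reduction $G'$, we have $\rho(G)\le \sqrt[k]{4}$ if and only if $\rho(G')\le \sqrt[k-1]{4}$. Thus for each $k$ the classification splits into (a) the non-reducible $k$-uniform hypergraphs with $\rho\le \sqrt[k]{4}$, enumerated directly, and (b) the reducible ones, which by pendant-vertex inflation correspond bijectively to the already-classified list at uniformity $k-1$. The recursion bottoms out at $k=2$, where Smith's classical theorem identifies connected graphs with $\rho<2$ as the simply-laced Dynkin diagrams $A_n, D_n, E_6, E_7, E_8$ and those with $\rho\le 2$ as their extended versions.

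Part (iii) is tackled first: for $k\ge 5$ every $k$-uniform hypergraph with $\rho\le \sqrt[k]{4}$ is reducible. The argument would produce a short list of minimal non-reducible configurations $H$ (e.g.\ two edges meeting in $\ge 2$ vertices, or a single edge all of whose vertices have degree $\ge 2$ together with its attached edges), compute $\rho(H)$ explicitly by solving the eigenvalue equation $\mathcal{A}(H)x = \rho x^{k-1}$ on the symmetric principal eigenvector, and check that $\rho(H) > \sqrt[k]{4}$; subhypergraph monotonicity $\rho(H)\le \rho(G)$ then completes the argument. Part (ii) applies the same technique at $k=4$ with finer case analysis, isolating exactly $H_{1,1,1,t}$, $t=1,2,3,4$, as the non-reducible survivors via direct spectral computation.

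Part (i), the case $k=3$, is where the main obstacle lies. The reducible half is immediate from Smith's theorem applied to the reduction, and accounts for the infinite families $P_m^{(3)}, D_m^{(3)}, B_m^{(3)}, B_m'^{(3)}, \bar{B}_m^{(3)}$ (inflations of $A_n, D_n, E_6, E_7, E_8$) together with a finite list of sporadic inflations. The hard work is the enumeration of \emph{non-reducible} $3$-uniform hypergraphs below the threshold, yielding $D_m'^{(3)}$, $BD_m^{(3)}$, and the thirty-one sporadic hypergraphs of Figure \ref{fig2}. This requires a systematic finite search that uses the transformation lemmas (\ref{Lem1}--\ref{Th1}) to collapse candidates onto canonical forms, paired with an explicit forbidden-subhypergraph catalogue (computed by direct spectral evaluation on small hypergraphs) to terminate each branch. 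Building this catalogue, controlling the diameter and maximum degree of surviving candidates, and distinguishing the sporadic cases from the infinite families is the most delicate step.
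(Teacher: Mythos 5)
A point of calibration first: the paper does not prove this lemma at all --- it is quoted from Lu and Man \cite{LM} and used as a black box, so your proposal has to be judged as a reconstruction of an entire external classification paper. Your top-level architecture does match theirs: the reduction/inflation correspondence of Lemma~\ref{Lem4} converts the reducible case at uniformity $k$ into the classification at uniformity $k-1$, the recursion bottoms out in Smith's theorem at $k=2$, and the real content is the enumeration of non-reducible hypergraphs. (One detail you need here, and do not state, is that under the strict bound $\rho<\sqrt[3]{4}$ no two edges can share two or more vertices --- such a pair already forces $\rho\ge 2^{2/3}=\sqrt[3]{4}$ --- so the reduction of a reducible $3$-uniform hypergraph is a simple graph and Smith's theorem applies.)

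Beyond that, there are two genuine gaps. First, your bookkeeping of which families are reducible is wrong: $E_6$, $E_7$, $E_8$ are single graphs, so their inflations are the three sporadic hypergraphs $E^{(3)}_{1,2,2}$, $E^{(3)}_{1,2,3}$, $E^{(3)}_{1,2,4}$, not the infinite families $B_m^{(3)}$, $B_m'^{(3)}$, $\bar{B}_m^{(3)}$; the reducible half of case (i) consists exactly of $P_m^{(3)}$, $D_m^{(3)}$ and the three $E$'s, while the three $B$-families and the remaining twenty-eight sporadic hypergraphs (the $F$'s and $G$'s, whose central edges have no pendant vertex) are non-reducible and must come out of the hard enumeration --- which your split does not produce. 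Second, and more seriously, the non-reducible enumeration cannot be a ``systematic finite search,'' because its output contains infinite families ($D_m'^{(3)}$, $B_m^{(3)}$, $B_m'^{(3)}$, $\bar{B}_m^{(3)}$, $BD_m^{(3)}$): to place these in the list you must certify the \emph{upper} bound $\rho<\sqrt[3]{4}$ uniformly in $m$, and nothing in your toolkit does this. Lemmas~\ref{Lem1}--\ref{Th1} only produce strict increases of $\rho$ (good for excluding candidates, useless for admitting them), and solving $\mathcal{A}(H)x=\rho x^{k-1}$ directly is infeasible along a whole family, let alone for all $k\ge 5$ simultaneously in part (iii). Lu and Man's actual engine is their $\alpha$-normal/$\alpha$-subnormal weighted incidence labeling criterion, a combinatorial certificate equivalent to $\rho(G)\le\sqrt[k]{4}$, which handles infinite families and all $k$ at once; without such a certificate your ``forbidden-subhypergraph catalogue'' has no termination mechanism, and your sketch cannot explain why $k\ge 5$ eliminates every non-reducible hypergraph while $k=4$ leaves exactly the four survivors $H_{1,1,1,t}$.
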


\begin{figure}
\centering\footnotesize
\begin{overpic}[scale=0.45]{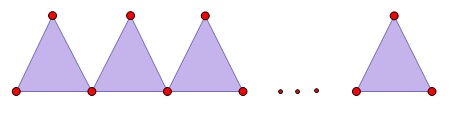}
\put(32,-2){\scriptsize $ P_m^{(3)}\, (m\ge 1)$}
\end{overpic}
\begin{overpic}[scale=0.45]{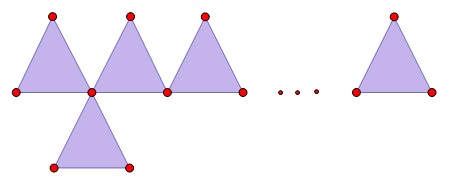}
\put(50,-2){\scriptsize $ D_m^{(3)}\,(m\ge 3)$}
\end{overpic}

\vspace{4mm}

\begin{overpic}[scale=0.45]{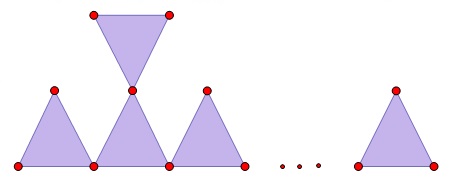}
\put(32,-3){\scriptsize$ D'^{(3)}_m\, (m\ge4)$}
\end{overpic}
\begin{overpic}[scale=0.45]{4}
\put(32,-3){\scriptsize$ B_m^{(3)}\,(m\ge 5)$}
\end{overpic}

\vspace{4mm}

\begin{overpic}[scale=0.4]{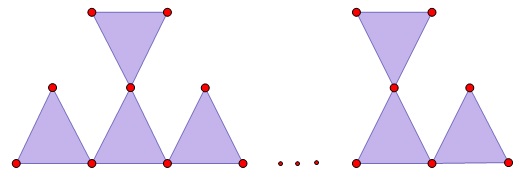}
\put(32,-5){\scriptsize$ B'^{(3)}_m\,(m\ge 6)$}
\end{overpic}
\begin{overpic}[scale=0.4]{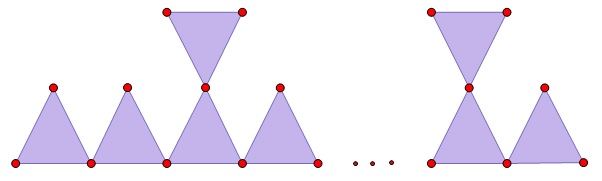}
\put(32,-5){\scriptsize$ \bar{B}_m^{(3)}\, (m\ge7)$}
\end{overpic}

\vspace{4mm}

\begin{overpic}[scale=0.4]{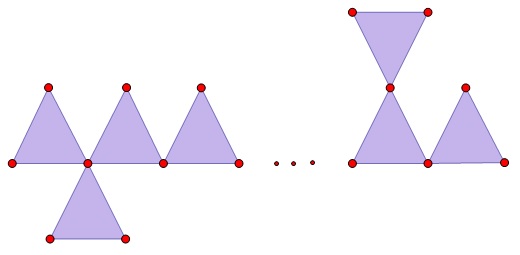}
\put(32,0){\scriptsize$ BD_m^{(3)}\,(m\ge 5)$}
\end{overpic}
\begin{overpic}[scale=0.4]{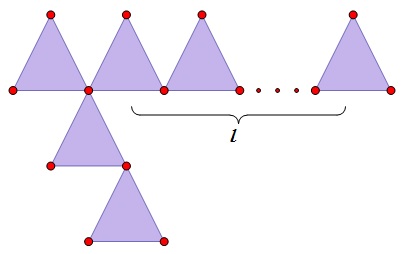}
\put(65,5){\scriptsize$ E_{1,2,l}^{(3)}$}
\end{overpic}

\vspace{4mm}
\begin{overpic}[scale=0.33]{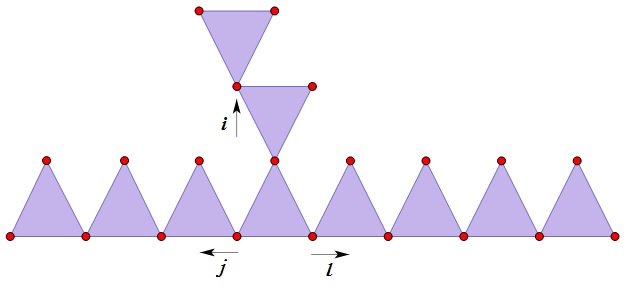}
\put(45,-5){\scriptsize$ F_{i,j,l}^{(3)}$}
\end{overpic}
\begin{overpic}[scale=0.33]{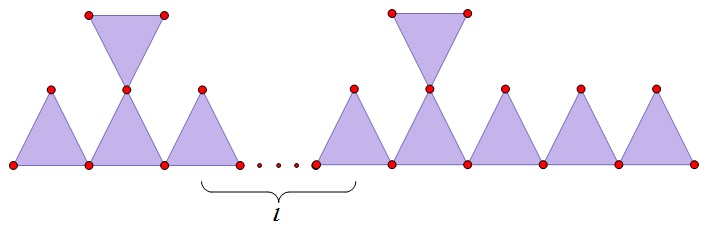}
\put(37,-5){\scriptsize$ G_{1,1:l:1,3}^{(3)}$}
\end{overpic}

\vspace{4mm}

\caption{Hypergraphs in Lemma \ref{Lem5}(i)}\label{fig2}
\end{figure}

\begin{figure}
\centering\footnotesize
\begin{overpic}[scale=0.5]{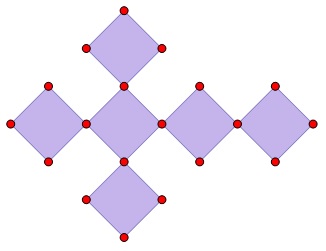}
\put(32,-2){\scriptsize $ H_{1,1,1,2}$}
\end{overpic}
\begin{overpic}[scale=0.5]{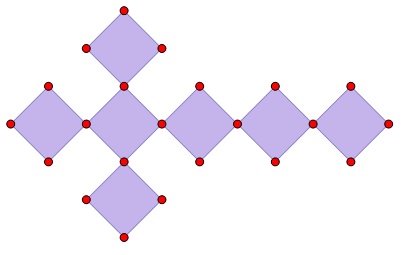}
\put(32,-2){\scriptsize $ H_{1,1,1,3}$}
\end{overpic}

\vspace{5mm}
\begin{overpic}[scale=0.5]{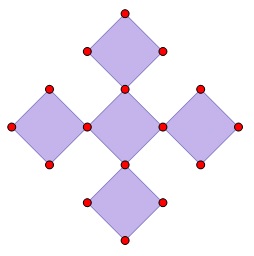}
\put(32,-2){\scriptsize $ H_{1,1,1,1}$}
\end{overpic}
\begin{overpic}[scale=0.5]{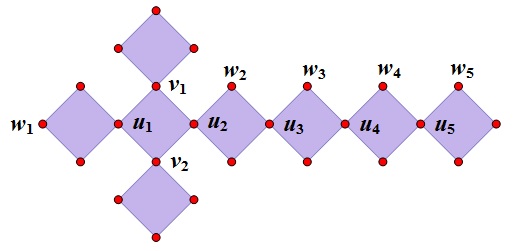}
\put(32,0){\scriptsize $ H_{1,1,1,4}$}
\end{overpic}
\caption{Hypergraphs
$H_{1,1,1,i}$ for $i=1,2,3,4$ in Lemma \ref{Lem5}(ii).}\label{fig1}
\end{figure}

We remark that in Figure~\ref{fig2},  $E^{(3)}_{i,j,l}$ consists of three pendant paths of length $i$, $j$ and $l$ at a common vertex, $F^{(3)}_{i,j,l}$ consists of three pendant paths of length $i$, $j$ and $l$ at three different vertices of a single edge, and $G_{i,j:l:p,q}^{(3)}$ consists of a $3$-uniform loose path of length $l+2$ with two pendant paths of length $i$ and $j$ at two pendant vertices in the first edge and  two pendant paths of length $p$ and $q$ at two pendant vertices in the last edge.

\section{Result}

Now we are ready to show our main result.

\begin{theo}\label{Th2} Let $G$ be a connected $k$-uniform hypergraph with $m\ge 4$ edges, where $k\ge 3$. Suppose that  $G\not\cong P_m^{(k)}$.
Then $\rho(G)\ge \rho(D'^{(k)}_m)$ with equality if and only if
$G\cong D'^{(k)}_m$.
\end{theo}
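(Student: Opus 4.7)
The plan is to handle $k=3$ directly via the classification in Lemma~\ref{Lem5}(i), and then bootstrap to $k\ge 4$ through the reduction identity $\rho(G)^k=\rho(G')^{k-1}$ supplied by Lemma~\ref{Lem4}. As a preliminary I would establish $\rho(D'^{(k)}_m)<\sqrt[k]{4}$ for every $k\ge 3$ and $m\ge 4$: the hypergraph $D'^{(k)}_m$ is reducible since each of its edges carries a pendant vertex, and a direct inspection shows that its reduction is $D'^{(k-1)}_m$ whenever $k\ge 4$. Iterating Lemma~\ref{Lem4} down to $k=3$ gives $\rho(D'^{(k)}_m)^k=\rho(D'^{(3)}_m)^3<4$, the last inequality holding because $D'^{(3)}_m$ appears on the list of Lemma~\ref{Lem5}(i).

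For the base case $k=3$, let $G$ be connected $3$-uniform with $m\ge 4$ edges and $G\not\cong P_m^{(3)}$. If $\rho(G)\ge \sqrt[3]{4}$ the preliminary bound is enough. Otherwise Lemma~\ref{Lem5}(i) confines $G$ to the families $D_m^{(3)}$, $B_m^{(3)}$, $B'^{(3)}_m$, $\bar{B}_m^{(3)}$, $BD_m^{(3)}$ and the thirty-one exceptional hypergraphs of Figure~\ref{fig2}. I would compare each candidate to $D'^{(3)}_m$. The cleanest case is $D_m^{(3)}$: the involution swapping $u_1\leftrightarrow w_2$ and $e_1\leftrightarrow f$, where $f$ is the extra pendant edge of $D'^{(3)}_m$ attached at the degree-$1$ vertex $w_2\in e_2$, is an \emph{automorphism} of $D'^{(3)}_m$, so the principal eigenvector satisfies $x_{u_1}=x_{w_2}$; moving $f$ from $w_2$ to $u_1$ by Lemma~\ref{Lem1} produces $D_m^{(3)}$ and strictly increases the spectral radius. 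For the other infinite families I would concentrate all extra edges toward $w_2$ by repeated use of Lemmas~\ref{Lem1} and~\ref{Lem2}, and straighten any pendant path of length $\ge 2$ into a single pendant edge via Lemmas~\ref{Lem3} and~\ref{Th1}; the exceptional hypergraphs have bounded $m$ and can be treated by the same transformations, or by a direct comparison when unavoidable.

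For the step to $k\ge 4$ I proceed by induction, assuming the theorem for uniformity $k-1$. If $\rho(G)>\sqrt[k]{4}$ the preliminary bound suffices. Otherwise Lemma~\ref{Lem5}(ii)--(iii) forces either $G$ reducible, or $k=4$ and $G\cong H_{1,1,1,t}$ for some $t\in\{1,2,3,4\}$; the latter amounts to four direct comparisons of $\rho(H_{1,1,1,t})$ with $\rho(D'^{(4)}_m)$. When $G$ is reducible with reduction $G'$, a connected $(k-1)$-uniform hypergraph on $m$ edges, a short structural argument shows that $G'\cong P_m^{(k-1)}$ forces $G\cong P_m^{(k)}$ and $G'\cong D'^{(k-1)}_m$ forces $G\cong D'^{(k)}_m$ (the deleted pendant in each edge is a single new vertex attached only to that edge). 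Because $G\not\cong P_m^{(k)}$ we conclude $G'\not\cong P_m^{(k-1)}$, and the inductive hypothesis gives $\rho(G')\ge\rho(D'^{(k-1)}_m)$ with equality iff $G'\cong D'^{(k-1)}_m$. Combining this with $\rho(G)^k=\rho(G')^{k-1}$ and $\rho(D'^{(k)}_m)^k=\rho(D'^{(k-1)}_m)^{k-1}$ delivers the required inequality and characterises the equality case.

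The main obstacle is the $k=3$ base case. The lemmas supply the right transformation templates, but each application hinges on establishing the sign of a difference of two principal eigenvector entries, and for the more intricate members of the exceptional list this will demand a careful local analysis; for some of the smallest values of $m$ a direct numerical comparison with $\rho(D'^{(3)}_m)$ may ultimately be needed.
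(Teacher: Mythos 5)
Your proposal is correct and follows essentially the same route as the paper: the preliminary bound $\rho(D'^{(k)}_m)<\sqrt[k]{4}$ via iterated reduction and Lemma~\ref{Lem5}(i), the $k=3$ case by comparing each hypergraph in the Lu--Man list against $D'^{(3)}_m=F^{(3)}_{1,1,m-3}$ using Lemmas~\ref{Lem1}, \ref{Lem3} and~\ref{Th1}, and the lift to $k\ge 4$ through the reduction identity $\rho^k(G)=\rho^{k-1}(G')$ with the irreducible exceptions $H_{1,1,1,t}$ at $k=4$ settled by direct computation. Your phrasing as induction on $k$ rather than iterated reduction is cosmetically different but functionally identical, and the sign analyses you worry about in the base case are already internal to Lemmas~\ref{Lem3} and~\ref{Th1}, so no extra eigenvector estimates are needed there (numerics enter only for the four $H_{1,1,1,t}$ comparisons, exactly as in the paper).
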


\begin{proof} By Lemma \ref{Lem5}(i) , $\rho( D_m'^{(3)})<\sqrt[3]{4}$. Then by Lemma \ref{Lem4}, we have $\rho^k(D_m'^{(k)})=\rho^3( D_m'^{(3)})<4$, and thus
$\rho(D_m'^{(k)})<\sqrt[k]{4}$.

Let $G$ be a connected $k$-uniform hypergraph with $m\ge 4$ edges and  $G\not\cong P_m^{(k)}$ having minimum spectral radius. We need only to show that $G\cong D'^{(k)}_m$.

Since $\rho(D_m'^{(k)})<\sqrt[k]{4}$, we have  $\rho(G)<\sqrt[k]{4}$.

\noindent
{\bf Case 1.}  $k=3$.

By Lemma~\ref{Lem5}(i) , $G$ is isomorphic to one of the following hypergraphs:
$P_m^{(3)}$ for $m\geq 1$, $D_m^{(3)}$ for $m\geq 3$, $D_m'^{(3)}$
for $m\geq 4$, $B_m^{(3)}$ for $m\geq 5$, $B_m'^{(3)}$ for $m\geq 6$,
$\bar{B}_m^{(3)}$ for $m\geq 7$, $BD_m^{(3)}$ for $m\geq 5$, and
thirty-one  additional hypergraphs: $E_{1,2,2}^{(3)}$,
$E_{1,2,3}^{(3)}$, $E_{1,2,4}^{(3)}$, $F_{2,3,3}^{(3)}$, $F_{2,2,l}^{(3)}$ $($for
$2\leq l\leq 6$$)$, $F_{1,3,l}^{(3)}$ $($for $3\leq l\leq 13$$)$,
$F_{1,4,l}^{(3)}$ $($for $4\leq l\leq 7$$)$, $F_{1,5,5}^{(3)}$, and
$G_{1,1:l:1,3}^{(3)}$ $($for $0\leq l\leq 5$$)$.

By Lemma \ref{Lem1}, we have  $\rho(D_m^{(3)})>\rho(D'^{(3)}_m)$.

By Lemma \ref{Lem3}, we have $\rho(BD_m^{(3)})>\rho(D'^{(3)}_m)$, and for $E^{(3)}_{1,2,l}$ with $l=2,3,4$, we have $m=l+3$, and thus
$\rho(E^{(3)}_{1,2,l})>\rho(D_{l+3}^{(3)})>\rho(D'^{(3)}_{m})$.

By Lemma~\ref{Th1}, we have
$\rho(B_m^{(3)})>\rho(D'^{(3)}_m)$, $\rho(B'^{(3)}_m)>\rho(D'^{(3)}_m)$, and $\rho(\bar{B}_m^{(3)})>\rho(D'^{(3)}_m)$.

For $F_{i,j,l}^{(3)}$, we have $m=i+j+l+1$. By Lemma~\ref{Th1}, we have
$\rho(F_{i,j,l}^{(3)})>\rho(F_{i,1,m-i-2}^{(3)})>\rho(F_{1,1,m-3}^{(3)})=\rho(D'^{(3)}_{m})$. Thus  $\rho(F_{2,3,3}^{(3)})>\rho(D'^{(3)}_{9})$,
$\rho(F_{2,2,m-5}^{(3)})>\rho(D'^{(3)}_{m})$ for $(7\le m\le 11)$,  $\rho(F_{1,3,m-5}^{(3)})>\rho(D'^{(3)}_{m})$ (for $8\le m\le 18$),  $\rho(F_{1,4,m-6}^{(3)})>\rho(D'^{(3)}_{m})$ (for $10\le m\le 13$), and $\rho(F_{1,5,5}^{(3)})>\rho(D'^{(3)}_{12})$.

By Lemma~\ref{Th1}, we have $\rho(G_{1,1:m-8:1,3}^{(3)})>\rho(F_{1,1,m-3}^{(3)})=\rho(D'^{(3)}_{m})$ for $8\le m\le 13$.

Therefore, if $G\not\cong D'^{(k)}_m$, then $\rho(G)>\rho(D'^{(3)}_m)$. It follows that $G\cong D'^{(3)}_m$.

\noindent {\bf Case 2.} $k=4$.

If $G$ is reducible, then by Lemma~\ref{Lem4}, for the  hypergraph $G_1$ reduced from $G$, we have $\rho(G_1)<\sqrt[3]{4}$, and by the proof in Case~1, we have $G_1\cong D'^{(3)}_m$, implying that $G\cong D'^{(4)}_m$.

Next suppose that  $G$ is not reducible. Then by Lemma \ref{Lem5}(ii),  $G\cong H_{1,1,1, i}$ with $i=1,2,3,4$. We will show that these are impossible.
Since $\rho(D_m^{(3)})>\rho(D'^{(3)}_m)$, by Lemma~\ref{Lem3} we have $\rho(D_m^{(4)})>\rho(D'^{(4)}_m)$. Thus it suffices to show that $\rho(G)>\rho(D^{(4)}_{m})$.

Suppose that $G\cong H_{1,1,1,4}$. Then $m=8$. From the table of \cite{CDS} we have
$\rho(D_{8}^{(2)})=1.962$. By Lemma \ref{Lem3} we have
$\rho(D_{8}^{(4)})=(\rho(D_{8}^{(2)})^{\frac{2}{4}}$, implying that
 $(\rho(D_{8}^{(4)}))^4$ $=3.8494$.

Let $x$ be the principal
eigenvector of $G$, and let $u_i,w_i$ for $i=1,2,3,4,5$ and $v_1,v_2$
be  the vertices of $H_{1,1,1,4}$ as labeled in Figure~\ref{fig1}).
Let $\rho(G)=\rho$ and $x_{u_i}=x_i$ for $i=1,2,3,4,5$. Then
$x_{v_1}=x_{v_2}=x_{u_1}=x_1$. We have $\rho
x_{w_1}^4=x_{w_1}^3x_1$, and then $x_{w_1}=\frac{x_1}{\rho}$.
Similarly, we have  that $x_{w_5}=\frac{x_5}{\rho}$,
$x_{w_i}=\sqrt{\frac{x_ix_{i+1}}{\rho}}$ for $i=2,3,4$. Thus
\begin{equation*}
\begin{aligned}
\rho x_1^4=\frac{x_1^3}{\rho^3}x_1+x_1^3x_2,\\
\rho x_2^4=x_1^3x_2+\frac{x_2^2x_3^2}{\rho},\\
\rho x_3^4=\frac{x_2^2x_3^2}{\rho}+\frac{x_3^2x_4^2}{\rho},\\
\rho x_4^4=\frac{x_3^2x_4^2}{\rho}+\frac{x_4^2x_5^2}{\rho},\\
\rho x_5^4=\frac{x_4^2x_5^2}{\rho}+\frac{x_5^3}{\rho^3}x_5.\\
\end{aligned}
\end{equation*}
From the first two equations, we have $(\rho^2-\frac{\rho^{10}}{(\rho^4-1)^3})x_2^2=x_3^2$, and from the the other three equations, we have
$x_2^2=\rho^2x_3^2-x_4^2=(\rho^6-3\rho^2+\frac{1}{\rho^2})x_5^2$ and $x_3^2=\rho^2x_4^2-x_5^2=(\rho^4-2)x_5^2$. Thus
\[
(\rho^4)^5-8(\rho^4)^4+21(\rho^4)^3-23(\rho^4)^2+13(\rho^4)-3=0.
\]
Since $P_{6}^{(4)}$ is  a subhypergraph of $G$, we have
$\rho^4>\rho^4(P_6^{(4)})=(\sqrt{2\cos\frac{\pi}{8}})^4=2+\sqrt{2}$.

Let $f(t)=t^5-8t^4+21t^3-23t^2+13t-3$.
Note that  $f(\frac{1}{2})=-\frac{3}{2^5}<0, f(2-\sqrt{2})=-7+5\sqrt{2}>0, f(1)=1>0,f(2+\sqrt{2})=-7-5\sqrt{2}<0,f(3.9)=-4.94181<0$,
and $f(4)=1>0$. Thus  $f(t)=0$ has  three real roots $t_1, t_2$ and $t_3$ satisfying  $\frac{1}{2}<t_3<2-\sqrt{2}$, $1<t_2<2+\sqrt{2}$, and $3.9<t_1<4$.
Let $t_4$ and $t_5$ be the remaining two roots of $f(t)=0$. Then $t_4t_5>0$,
$t_4+t_5>8-(2-\sqrt{2})-(2+\sqrt{2})-4=0$,  and $t_4+t_5<8-\frac{1}{2}-1-3.9=2.6<2+\sqrt{2}$. Note that  $\rho^4>2+\sqrt{2}$. So whether $t_4$ and $t_5$ are real or not, they can not be equal to $\rho$.
Thus  $\rho^4=t_1>3.9>3.8494= (\rho(D^{(4)}_8)^4$, i.e., $\rho(G)>\rho(D^{(4)}_8)$, as desired.

In the following, we consider the cases when $G\cong H_{1,1,1,i}$ for $i=1,2,3$.
From the table of \cite{CDS} we have
$\rho(D_{5}^{(2)})=1.902$,  $\rho(D_{6}^{(2)})=1.932$,   and $\rho(D_{7}^{(2)})=1.950$. Then by Lemma \ref{Lem3} we have
$\rho(D_{5}^{(4)})=1.3791$, $(\rho(D_{6}^{(4)}))^4=3.733$, $(\rho(D_{7}^{(4)}))^4=3.8025$.
By similar but simpler  argument as above, we have  $\rho(H_{1,1,1,1})$, $\rho(H_{1,1,1,2}^{(4)})$, and $\rho(H_{1,1,1,3})$ are roots of
$\rho^4-\rho^3-1=0$, $(\rho^4)^4-6(\rho^4)^3+10(\rho^4)^2-7(\rho^4)+2=0$, and
$(\rho^4)^5-7(\rho^4)^4+15(\rho^4)^3-13(\rho^4)^2+6(\rho^4)-1=0$, respectively.
And  we may check that
\begin{equation*}
\begin{aligned}
&\rho(G)> 1.38>1.3791=  \rho(D^{(4)}_5) & \mbox{ if }i=1, \\
&\rho^4(G)> 3.8>3.733=\rho^4(D^{(4)}_6) & \mbox{ if }i=2,\\
&\rho^4(G)> 3.9>3.8025=\rho^4(D^{(4)}_7) & \mbox{ if }i=3,
\end{aligned}
\end{equation*}
i.e., $\rho(G)>\rho(D^{(4)}_m)$, as desired.

 \noindent
 {\bf Case 3.} $k\ge 5$.

By Lemma~\ref{Lem5}(iii) , $G$ is reducible. By Lemma~\ref{Lem4}, for the  hypergraph $G_1$ reduced from $G$, we have $\rho(G_1))<\sqrt[k-1]{4}$. Repeating this process by using Lemmas~\ref{Lem5}(iii)  and \ref{Lem4}, we have a hypergraph sequence $G_0, G_1, \dots, G_{k-4}$ with $G_0=G$, where $G_{i+1}$ is reduced from $G_i$ for $i=0, \dots, k-5$. It is easily seen that $G_{k-4}$ is $4$-uniform and $\rho^k(G)=\rho^4(G_{k-4})<4$. By the proof of Case~2, $G_{k-4}\cong D^{(4)}_m$, and thus $G\cong D^{(k)}_m$.
\end{proof}

Among connected $2$-uniform hypergraphs with $m$ edges, the ones with spectral radius less than $2$ are determined in \cite{Sm} to be the trees $P_m^{(2)}$, $D_m^{(2)}$, and three additional trees with $m=5,6,7$, obtained from  $D_{m-1}^{(2)}$ by attaching a pendant edge at a pendant vertex that is adjacent to a vertex of degree $3$, and by Lemma~\ref{Lem3}, it is easy to see that  $P_m^{(2)}$ for $m\ge 1$ is the unique one with minimum spectral radius, while $D_m^{(2)}$ for $m\ge 3$ is the unique one with second minimum spectral radius.

 Let $G$ be a connected $k$-uniform hypergraph with $2$ edges, where $k\ge 3$. Let $a$ be the number of common vertices of the two edges. Obviously, $1\le a \le k-1$.  By direct calculation, $\rho(G)=2^{\frac{a}{k}}$. 
 Therefore $P_2^{(k)}$ and   the hypergraph in which two edges share two vertices in common  are the unique hypergraphs with minimum and second minimum spectral radii among connected $k$-uniform hypergraphs with exactly $2$ edges.

 Let $G$ be a connected $k$-uniform hypergraphs with  $3$ edges, where $k\ge 3$. If there is a subhypergraph consisting  two edges containing at least two vertices in common, then $\rho(G)\ge \sqrt[k]{4}$. If  any two edges of $G$ contain at most one common vertex, then $G$ is a cycle of length $3$, $D_3^{(k)}$ or $P_3^{(k)}$.   If $G$ is a cycle of length $3$,  then  $\rho(G)=\sqrt[k]{4}$. By Lemma~\ref{Lem3}, $\rho(D_3^{(k)})>\rho(P_3^{(k)})$. By Lemmas~\ref{Lem4} and \ref{Lem5}(i) , $\rho^k(D_3^{(k)})=\rho^3(D_3^{(3)})<4$. Therefore $P_3^{(k)}$ and   $D_3^{(k)}$ are the unique hypergraphs with minimum and second minimum spectral radii among connected $k$-uniform hypergraphs with exactly $3$ edges.

For $m\ge 4$ and $k\ge 3$, by Lemma~\ref{Th1}, we have  $\rho(D'^{(k)}_m)>\rho(P_m^{(k)})$.

Combining the above facts and Theorem~\ref{Th2}, we have

\begin{theo} Among connected $k$-uniform hypergraphs with $m$ edges,
 $P_m^{(k)}$ for $m\ge 1$ is the unique one with minimum spectral radius, and
 the hypergraph in which two edges share two vertices in common for $m=2$ and $k\ge 3$,
$D_3^{(k)}$ for $k=2$ or $m=3$, and $D'^{(k)}_m$ for  $m\ge 4$ and $k\ge 3$
are   the unique ones with  second minimum spectral radius.
\end{theo}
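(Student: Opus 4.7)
The plan is to assemble the theorem by case analysis, handling $k=2$, the small cases $m\in\{2,3\}$ with $k\ge 3$, and the main range $m\ge 4$ with $k\ge 3$ separately, drawing the bulk of the work from Theorem~\ref{Th2}.

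For $k=2$, I would quote Smith's classification~\cite{Sm}: the only trees on $m$ edges with spectral radius below $2$ are $P_m^{(2)}$, $D_m^{(2)}$, and three sporadic trees that occur only for $m=5,6,7$; Lemma~\ref{Lem3} orders all these to identify $P_m^{(2)}$ and $D_m^{(2)}$ as the unique first and second minimizers. For $k\ge 3$ and $m=2$, I would symmetrize the principal eigenvector over the three natural vertex classes (the $a$ shared vertices and the two sets of $k-a$ unshared vertices); the eigenequations reduce to $r^a=\rho$ and $r^{k-a}=2/\rho$ where $r=x_2/x_1$, yielding $\rho(G)=2^{a/k}$, which is monotone in $a\in\{1,\dots,k-1\}$, so $P_2^{(k)}$ (corresponding to $a=1$) and the hypergraph with $a=2$ are the minimum and second minimum. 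For $k\ge 3$ and $m=3$, any hypergraph containing a pair of edges with $\ge 2$ common vertices has $\rho\ge \sqrt[k]{4}$ by the $m=2$ calculation together with monotonicity of the spectral radius under taking connected subhypergraphs, so the only remaining candidates are $P_3^{(k)}$, $D_3^{(k)}$, and the loose $3$-cycle (which has $\rho=\sqrt[k]{4}$); Lemma~\ref{Lem3} yields $\rho(P_3^{(k)})<\rho(D_3^{(k)})$, and Lemmas~\ref{Lem4} and~\ref{Lem5}(i) together give $\rho(D_3^{(k)})<\sqrt[k]{4}$.

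For the main range $k\ge 3$ and $m\ge 4$, Theorem~\ref{Th2} identifies $D'^{(k)}_m$ as the unique minimizer among $G\not\cong P_m^{(k)}$, so it remains only to verify the strict inequality $\rho(P_m^{(k)})<\rho(D'^{(k)}_m)$. I would extract this from a single application of Lemma~\ref{Th1}: take the base $G_0=P_2^{(k)}=(u_0,e_1,u_1,e_2,u_2)$ with pendant edge $e_2=\{u_1,u_2,w_1,\dots,w_{k-2}\}$ and pendant vertices $u=u_2$ and $v=w_1$; then $(G_0)_{u,v}(m-2,0)\cong P_m^{(k)}$ and $(G_0)_{u,v}(m-3,1)\cong D'^{(k)}_m$, and since $m\ge 4$ forces $m-3\ge 1$, Lemma~\ref{Th1} with $p=m-3$, $q=1$ delivers $\rho(D'^{(k)}_m)>\rho(P_m^{(k)})$. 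The main step requiring genuine attention is framing this last application so that the same base $G_0$ realizes both $P_m^{(k)}$ and $D'^{(k)}_m$ under the operator $(\cdot)_{u,v}(p,q)$; once that is in place, the theorem reduces to concatenating the four cases.
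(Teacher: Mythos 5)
Your proposal is correct and follows essentially the same route as the paper: Smith's classification plus Lemma~\ref{Lem3} for $k=2$, the direct computation $\rho=2^{a/k}$ for $m=2$, the subhypergraph/classification argument with Lemmas~\ref{Lem3}, \ref{Lem4} and \ref{Lem5}(i) for $m=3$, and Theorem~\ref{Th2} combined with Lemma~\ref{Th1} (your explicit realization of $P_m^{(k)}$ and $D'^{(k)}_m$ as $(G_0)_{u,v}(m-2,0)$ and $(G_0)_{u,v}(m-3,1)$ over the base $G_0=P_2^{(k)}$ is exactly the intended, and valid, application) for $m\ge 4$. The only blemish is a harmless labeling slip in the $m=2$ computation: with $r=x_2/x_1$ the eigenequations give $r^a=1/\rho$ and $r^{k-a}=\rho/2$ (your stated equations correspond to $r=x_1/x_2$), but either way $\rho=2^{a/k}$ follows.
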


\vskip\baselineskip

\noindent{ \bf {Acknowledgements}}

\vskip\baselineskip  This work was supported by National Natural
Science Foundation of China (No. 11071089 and No. 11701102), Natural Science
Foundation of Guangdong Province (No. 2017A030313032 and
No. 2017A030310441).

\end{document}